\newtheorem{theorem}{Theorem}
\newtheorem{predl}{Theorem}
\newtheorem{lemma}{Lemma}
\begin{document}

\title{Marcinkiewicz--Zygmund Strong Law of Large Numbers \\ for Pairwise i.i.d. Random Variables}

\author{Valery Korchevsky\thanks{Saint-Petersburg State University of Aerospace Instrumentation, Saint-Petersburg. \endgraf E-mail: \texttt{valery\_ko@list.ru} }}

\date{}

\maketitle

\begin{abstract}
It is shown that the Marcinkiewicz--Zygmund strong law of large numbers holds for pairwise independent identically distributed random variables. It is proved that if $X_{1}, X_{2}, \ldots$ are pairwise independent identically distributed random variables such that $E|X_{1}|^p < \infty$ for some $1 < p < 2$, then $(S_{n}-ES_{n})/n^{1/p} \to 0$ a.s. where $S_{n} = \sum_{k=1}^{n} X_{k}$.
\end{abstract}

\bigskip

\noindent \textbf{Keywords:} strong law of large numbers, pairwise independent random variables, identically distributed random variables.

\bigskip

\bigskip

\noindent {\Large{\textbf{1. Introduction.}}}

\medskip

\noindent Let $\{X_{n}\}_{n=1}^{\infty}$ be a sequence of independent identically distributed random variables. There are two famous theorems on the strong law of large numbers for such a sequence: The Kolmogorov theorem and the Marcinkiewicz--Zygmund theorem (see e.g. Lo\`{e}ve~\cite{Loeve77}). Let $S_{n} = \sum_{k=1}^{n} X_{k}$. By Kolmogorov's theorem, there exists a constant $b$ such that $S_{n}/n \to b$ a.s. if and only if $E|X_{1}| < \infty$; if the latter condition is satisfied then $b = EX_{1}$.

Now we state the Marcinkiewicz--Zygmund theorem:

\begin{predl}\label{P101}

Let $\{X_{n}\}_{n=1}^{\infty}$ be a sequence of independent identically distributed random variables. If $0 < p < 2$ then the relation $E|X_{1}|^p < \infty$ is equivalent to the relation

\begin{equation}\label{e11}
   \frac{S_{n}-nb}{n^{1/p}} \to 0 \qquad \mbox{a.s.}
\end{equation}

\noindent Here $b=0$ if $0 < p < 1$, and $b = EX_{1}$ if $1 \leqslant p < 2$.

\end{predl}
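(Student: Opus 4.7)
The plan is to prove sufficiency via a classical truncation argument and derive necessity from the second Borel--Cantelli lemma. For sufficiency, assuming $E|X_1|^p < \infty$, I set $Y_n = X_n \mathbf{1}_{\{|X_n| \leqslant n^{1/p}\}}$. The tail-sum identity $\sum_{n \geqslant 1} P(|X_1| > n^{1/p}) \leqslant E|X_1|^p$ combined with the first Borel--Cantelli lemma shows that $X_n = Y_n$ for all but finitely many $n$ almost surely, so it suffices to prove \eqref{e11} with $Y_n$ in place of $X_n$.

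The next step controls the normalized variances. Using $\mathrm{Var}(Y_n) \leqslant E[X_1^2 \mathbf{1}_{\{|X_1| \leqslant n^{1/p}\}}]$ and swapping summation with expectation,
\begin{equation*}
\sum_{n=1}^{\infty} \frac{\mathrm{Var}(Y_n)}{n^{2/p}} \leqslant E\!\left[X_1^2 \sum_{n \geqslant |X_1|^p} \frac{1}{n^{2/p}}\right] \leqslant C \cdot E|X_1|^p < \infty,
\end{equation*}
where finiteness of the inner tail sum crucially uses $2/p > 1$. Kolmogorov's convergence theorem for sums of independent random variables then makes $\sum_n (Y_n - EY_n)/n^{1/p}$ converge almost surely, and Kronecker's lemma yields $n^{-1/p} \sum_{k=1}^n (Y_k - EY_k) \to 0$ a.s.

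What remains is to show that the centering matches, i.e. that $n^{-1/p}\bigl(\sum_{k=1}^n EY_k - nb\bigr) \to 0$. For $1 \leqslant p < 2$, I write $EX_1 - EY_k = E[X_1 \mathbf{1}_{\{|X_1| > k^{1/p}\}}]$ and use Fubini together with $E|X_1|^p < \infty$ to show $\sum_{k=1}^n E[|X_1| \mathbf{1}_{\{|X_1| > k^{1/p}\}}] = o(n^{1/p})$. For $0 < p < 1$, where $b = 0$, an analogous Fubini argument applied to $\sum_{k=1}^n E[|X_1| \mathbf{1}_{\{|X_1| \leqslant k^{1/p}\}}]$ delivers the required rate. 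The necessity direction is short: if \eqref{e11} holds then $X_n/n^{1/p} \to 0$ a.s., and by independence and the second Borel--Cantelli lemma this forces $\sum_n P(|X_1| > n^{1/p}) < \infty$, equivalent to $E|X_1|^p < \infty$. The main technical obstacle is the centering step, since it requires a careful tail-integral estimate; note that independence enters only through Kolmogorov's convergence theorem and the converse Borel--Cantelli lemma, which is precisely why extending the theorem beyond independence is nontrivial and justifies the main contribution of the paper.
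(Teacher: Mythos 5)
Your argument is correct, but note that the paper gives no proof of this statement: Theorem~\ref{P101} is quoted as a classical result with a reference to Lo\`{e}ve, so there is nothing in the text to compare against line by line. Your route is the standard one --- truncate at $n^{1/p}$, dispose of the tails by the first Borel--Cantelli lemma, sum the normalized variances of the truncated variables, apply Kolmogorov's convergence theorem and Kronecker's lemma, and then match the centering --- and all the estimates you invoke are sound. The only places a reader would want more detail are the two centering computations: for $1 \leqslant p < 2$ one should bound $E(|X_1| \mathbb{I}_{\{|X_1|>k^{1/p}\}}) \leqslant k^{(1-p)/p} E(|X_1|^p \mathbb{I}_{\{|X_1|>k^{1/p}\}})$ and finish with a Ces\`aro argument, and for $0<p<1$ one must split the truncated first moment at a fixed level $M$ before letting $M \to \infty$, since the crude bound only gives $O(n^{1/p})$ rather than $o(n^{1/p})$. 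It is worth contrasting your proof with the machinery the paper builds for Theorem~\ref{T1} (the pairwise independent case, $1<p<2$): there Kolmogorov's convergence theorem is unavailable, so the author replaces the Kolmogorov--Kronecker step by truncation lemmas in the same spirit as yours (Lemmas~\ref{Lem201} and~\ref{Lem202}) plus a dyadic blocking argument (Lemma~\ref{Lem203} and Steps 3--7) that uses only Chebyshev's inequality and additivity of variances, hence only pairwise independence. Your closing remark correctly locates where full independence enters your proof; note, however, that the converse Borel--Cantelli lemma is valid for pairwise independent events as well (Erd\H{o}s--R\'enyi), which is essentially how Petrov's result cited in the introduction settles the necessity direction in the pairwise setting, so the genuine obstacle is only the convergence-theorem step.
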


The aim of this work is to show that Theorem~\ref{P101} remains true if we replace the independence condition by the condition of pairwise independence of random variables $X_{1}, X_{2}, \ldots$

Etemadi~\cite{Etem81} proved the Kolmogorov theorem under the pairwise independence assumption instead of the independence condition. Sawyer~\cite{Sawyer66} showed that if $0 < p < 1$ then the condition $E|X_{1}|^p < \infty$ implies $S_{n}/n^{1/p} \to 0$ a.s. without any independence condition. Petrov~\cite{Petr96} proved that if $0 < p < 2$ then relation~\eqref{e11} (with $b=0$ or $EX_{1}$ according as $p < 1$ or $p \geqslant 1$) implies that $E|X_{1}|^p < \infty$ assuming pairwise independence.

In the present work we shall prove that if $1 < p < 2$ then the condition $E|X_{1}|^p < \infty$ implies $(S_{n}-ES_{n})/n^{1/p} \to 0$ a.s. under the pairwise independence assumption. There are a number of papers that contain results on the strong law of large numbers for sequences of pairwise independent identically distributed random variables. See Choi and Sung~\cite{ChoiSung85}, Li~\cite{Li88}, Martikainen~\cite{Mart95}, Sung~\cite{Sung97, Sung14} (recent work~\cite{Sung14} contains more detailed review). However, results in these papers do not generalize Theorem~\ref{P101} to sequences of pairwise independent random variables.

\bigskip

\noindent {\Large{\textbf{2. Main results.}}}

\medskip

\noindent The aim of this paper is to prove the following result:

\begin{theorem}\label{T1}
Let $\{X_{n}\}_{n=1}^{\infty}$ be a sequence of pairwise independent identically distributed random variables. If $E|X_{1}|^p < \infty$ where $1 < p < 2$, then

\begin{equation}\label{e1001}
   \frac{S_{n}-ES_{n}}{n^{1/p}} \to 0 \qquad \mbox{a.s.}
\end{equation}

\end{theorem}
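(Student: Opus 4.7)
I would adapt Etemadi's approach to Kolmogorov's SLLN to the sharper Marcinkiewicz--Zygmund scaling. By splitting $X_k = X_k^+ - X_k^-$ (each sequence is pairwise i.i.d.), reduce to the non-negative case; noting $(S_n - ES_n)/n^{1/p} = (S_n - n\mu)/n^{1/p}$ with $\mu := EX_1$ (finite since $p > 1$), the goal becomes $(S_n - n\mu)/n^{1/p} \to 0$ a.s. Truncate at the Marcinkiewicz scale: $Y_k := X_k \mathbf{1}_{\{X_k \leq k^{1/p}\}}$, $T_n := \sum_{k=1}^n Y_k$; the $Y_k$ remain pairwise i.i.d. Since $E|X_1|^p < \infty$ gives $\sum_k P(X_1 > k^{1/p}) < \infty$, Borel--Cantelli yields $(S_n - T_n)/n^{1/p} \to 0$ a.s., and a Fubini/dominated-convergence argument (using $p > 1$) shows $E(S_n - T_n) = \sum_k E(X_1 \mathbf{1}_{\{X_1 > k^{1/p}\}}) = o(n^{1/p})$. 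Hence it suffices to prove $(T_n - ET_n)/n^{1/p} \to 0$ a.s.

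The key second-moment bound $\sum_k EY_k^2/k^{2/p} \leq C \, E|X_1|^p < \infty$ follows from the pointwise inequality $X_1^{2-p} \leq k^{(2-p)/p}$ on $\{X_1 \leq k^{1/p}\}$ together with Fubini and $2/p > 1$. Because the $Y_k$ are pairwise independent, their variances add; interchanging summation along a geometric subsequence $n_j := \lfloor \alpha^j \rfloor$ ($\alpha > 1$) yields $\sum_j \mathrm{Var}(T_{n_j})/n_j^{2/p} < \infty$, and Chebyshev's inequality with Borel--Cantelli gives $(T_{n_j} - ET_{n_j})/n_j^{1/p} \to 0$ a.s.

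The hardest step is interpolating to all $n \in [n_j, n_{j+1}]$. For Kolmogorov's SLLN ($p = 1$) Etemadi's monotonicity argument succeeds because the in-block gap is of order $(n_{j+1} - n_j)\mu/n_j = (\alpha - 1)\mu$, harmless as $\alpha \downarrow 1$. For $p > 1$ the analogous gap is $(\alpha - 1)\mu \, n_j^{1-1/p}$, which \emph{diverges}, so bare monotonicity of $S_n$ is insufficient. One must directly control the in-block centered partial sums $R_n^{(j)} := \sum_{k=n_j+1}^n (Y_k - EY_k)$ uniformly in $n$. Since the summands are orthogonal (pairwise independent, mean zero), a Rademacher--Menshov-type maximal inequality is available, but its $(\log n)^2$ factor is on the borderline of summability under only $E|X_1|^p < \infty$. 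I therefore expect the final argument to require a careful balancing---for instance a secondary split of each $Y_k$ into a bounded ``bulk'' part and a rare ``large'' part (handling the large part by Borel--Cantelli and the bulk by a sharper variance estimate that absorbs the logarithmic factor), or a subsequence whose spacing is tuned so that the in-block fluctuation and the Chebyshev summability close up simultaneously under the hypothesis $E|X_1|^p < \infty$.
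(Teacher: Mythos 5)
Your plan reproduces, in substance, the paper's preparatory work: your Borel--Cantelli reduction to the truncated sums, the estimate $E(S_n-T_n)=o(n^{1/p})$, the bound $\sum_k EY_k^2/k^{2/p}\le C\,E|X_1|^p$, and the Chebyshev--Borel--Cantelli argument along a geometric subsequence correspond to the paper's Lemmas~\ref{Lem201}--\ref{Lem203} and Steps 1--3 (the paper works with signed variables and the truncation $|X_i|\le n^{1/p}$ via a doubly indexed array $X_i^{(n)}=X_i\mathbb{I}_{\{|X_i|\le n^{1/p}\}}$ rather than splitting into $X^+-X^-$ and truncating at $i^{1/p}$, but that is a matter of bookkeeping). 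However, the proposal has a genuine gap, and it is exactly the one you flag yourself: the interpolation from the subsequence $n_j$ to all $n$. You correctly observe that Etemadi's monotonicity device fails because the in-block drift $(\alpha-1)\mu n_j^{1-1/p}$ is not $o(n_j^{1/p})$ for $p>1$, but you then only list candidate strategies (Rademacher--Menshov with its problematic $(\log n)^2$ factor, a secondary bulk/large split, a tuned subsequence) without carrying any of them out. A plan that ends with ``I expect the final argument to require a careful balancing'' at the decisive step is not a proof.

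For comparison, the paper closes this gap not with a maximal inequality of Rademacher--Menshov type but with algebraic bookkeeping built on the two-parameter truncation: for $2^n<k\le 2^{n+1}$ it decomposes $S_k^{(k)}-ES_k^{(k)}$ into $(S_{2^n}^{(k)}-ES_{2^n}^{(k)})$, $(S_k^{(2^n)}-ES_k^{(2^n)})$, $(S_{2^n}^{(2^n)}-ES_{2^n}^{(2^n)})$, plus remainders supported on the thin shells $\{2^{n/p}<|X_i|\le k^{1/p}\}$, which are absorbed into the tail sums of Lemmas~\ref{Lem201} and~\ref{Lem202} (Steps 5--7). The only new stochastic input is the block maximal estimate of Step 4 for $\max_k\bigl|\sum_{i=2^n+1}^k(X_i^{(i)}-EX_i^{(i)})\bigr|$, which the paper bounds by the block variance ``by Chebyshev's inequality.'' Your instinct that a Kolmogorov-type maximal inequality is unavailable under mere pairwise independence is sound, and it points at precisely the place where the paper's own justification is tersest; but as a blind attempt your write-up does not supply any working substitute, so the central step of the theorem remains unproved in your proposal.
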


If we combine Etemadi's, Sawyer's, and Petrov's results mentioned in the previous section with Theorem~\ref{T1}, we get a generalization of the Marcinkiewicz--Zygmund theorem (Theorem~\ref{P101}):
{\sloppy

}

\begin{theorem}\label{T2}
Let $\{X_{n}\}_{n=1}^{\infty}$ be a sequence of pairwise independent identically distributed random variables. If $0 < p < 2$ then the relation $E|X_{1}|^p < \infty$ is equivalent to relation~\eqref{e11}.

\end{theorem}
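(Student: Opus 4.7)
The plan is to derive Theorem~\ref{T2} as a synthesis of four ingredients already assembled in the introduction: Sawyer's theorem for the range $0<p<1$, Etemadi's theorem for $p=1$, Theorem~\ref{T1} for $1<p<2$, and Petrov's theorem for the converse implication throughout $0<p<2$. Thus the proof of Theorem~\ref{T2} is essentially a packaging argument that splits on $p$ and matches the centring constants.

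For the forward implication $E|X_{1}|^p<\infty \Rightarrow \eqref{e11}$, I would argue case by case. When $0<p<1$, Sawyer's theorem~\cite{Sawyer66} yields $S_{n}/n^{1/p}\to 0$ a.s.\ with no independence required at all, which is exactly~\eqref{e11} with $b=0$. When $p=1$, the pairwise-independence version of Kolmogorov's theorem due to Etemadi~\cite{Etem81} gives $S_{n}/n\to EX_{1}$ a.s., which is~\eqref{e11} with $b=EX_{1}$ and exponent $1/p=1$. When $1<p<2$, the hypothesis $E|X_{1}|^p<\infty$ combined with $p>1$ forces $E|X_{1}|<\infty$ (for instance from $|x|\le 1+|x|^p$), so $ES_{n}=nEX_{1}$; Theorem~\ref{T1} then delivers $(S_{n}-nEX_{1})/n^{1/p}\to 0$ a.s., which is~\eqref{e11} with $b=EX_{1}$.

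For the converse implication, Petrov's theorem~\cite{Petr96} supplies precisely what is needed across the full range $0<p<2$: under pairwise independence, relation~\eqref{e11} with the stated choice of $b$ implies $E|X_{1}|^p<\infty$. No further argument is required.

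Accordingly there is no genuine obstacle at this stage of the paper; all of the analytic difficulty has been absorbed into Theorem~\ref{T1}. The only point worth a brief verification is the match-up between the centring $S_{n}-ES_{n}$ used in Theorem~\ref{T1} and the centring $S_{n}-nEX_{1}$ appearing in~\eqref{e11}, which, as noted above, follows at once from $E|X_{1}|<\infty$ in the regime $1<p<2$.
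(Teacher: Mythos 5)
Your proposal is correct and matches the paper exactly: the paper proves Theorem~\ref{T2} with the single sentence preceding its statement, combining Sawyer's result for $0<p<1$, Etemadi's theorem for $p=1$, Theorem~\ref{T1} for $1<p<2$, and Petrov's converse, just as you do. Your added remark that $E|X_1|^p<\infty$ with $p>1$ gives $E|X_1|<\infty$ and hence $ES_n=nEX_1$ is the right (and only) detail to check when matching the centring constants.
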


\bigskip

\noindent {\Large{\textbf{3. Proof of Theorem~\ref{T1}.}}}

\medskip

\noindent To prove our main result we need the following lemmas.

\begin{lemma}\label{Lem201}
Let $\{X_{n}\}_{n=1}^{\infty}$ be a sequence of identically distributed random variables. If $E|X_{1}|^p < \infty$ where $1 < p < 2$, then

\begin{equation}\label{e201}
\frac{\sum_{i=1}^{n} |X_{i}| \mathbb{I}_{\{|X_{i}| > n^{1/p}\}}}{n^{1/p}} \to 0 \qquad \mbox{a.s.}
\end{equation}

\end{lemma}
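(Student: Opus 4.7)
The plan is to show that, almost surely, the numerator in~\eqref{e201} stays bounded by a finite (random) constant for all sufficiently large $n$; then dividing by $n^{1/p}\to\infty$ immediately gives the conclusion. The key observation is that the truncation threshold $n^{1/p}$ is increasing in $n$, so a single Borel--Cantelli step combined with a monotonicity trick should suffice.

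First I would apply the first Borel--Cantelli lemma (which requires no independence) to the events $A_i=\{|X_i|>i^{1/p}\}$. Since the $X_i$ are identically distributed,
$$\sum_{i=1}^{\infty} P(A_i) \;=\; \sum_{i=1}^{\infty} P(|X_1|^p>i) \;\leq\; E|X_1|^p \;<\; \infty,$$
so almost surely there exists a finite $N=N(\omega)$ with $|X_i(\omega)|\leq i^{1/p}$ for every $i>N$. Next, for any $n>N(\omega)$ and any index $i$ with $N(\omega)<i\leq n$, we have $|X_i|\leq i^{1/p}\leq n^{1/p}$, so the indicator $\mathbb{I}_{\{|X_i|>n^{1/p}\}}$ vanishes. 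Hence
$$\sum_{i=1}^{n} |X_i|\,\mathbb{I}_{\{|X_i|>n^{1/p}\}} \;\leq\; \sum_{i=1}^{N(\omega)} |X_i| \;=:\; C(\omega),$$
a finite quantity that does not depend on $n$. Dividing by $n^{1/p}$ and letting $n\to\infty$ yields~\eqref{e201}.

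There is essentially no serious obstacle here: the lemma is little more than a one-shot application of Borel--Cantelli. The real content is the observation that monotonicity of the threshold turns the ``late'' terms of the sum (those with $i>N$) into exact zeros, rather than leaving a residual tail that would have to be estimated. Note also that pairwise independence plays no role in this lemma --- only the identical distribution of the $X_i$ and the finite $p$-th moment are actually used.
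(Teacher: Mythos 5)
Your proof is correct and rests on the same key step as the paper's: the first Borel--Cantelli lemma applied to the events $\{|X_i|>i^{1/p}\}$, whose probabilities are summable precisely because $E|X_1|^p<\infty$, with no independence assumption needed. The only difference is in the finish and it is minor: the paper passes to $p$-th powers via the bound $|X_i|\mathbb{I}_{\{|X_i|>n^{1/p}\}}/n^{1/p}\leqslant |X_i|^p\mathbb{I}_{\{|X_i|>i^{1/p}\}}/n$ and concludes with a Ces\`{a}ro-mean argument, whereas you observe directly that for $n>N(\omega)$ every term of the numerator with index $i>N(\omega)$ vanishes, so the numerator is eventually dominated by a fixed random constant --- a slightly more transparent route to the same conclusion.
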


\begin{proof}
Let $U_{n} = |X_{n}|^{p} \mathbb{I}_{\{|X_{n}| > n^{1/p}\}}$, $n \geqslant 1$. Note that condition $E|X_{1}|^p < \infty$ is equivalent to the relation

\begin{equation}\label{e202}
\sum_{n=1}^{\infty} P(|X_{1}| > n^{1/p}) < \infty.
\end{equation}

\noindent Thus, we have

\begin{equation*}
\sum_{n=1}^{\infty} P(U_{n} \ne 0) = \sum_{n=1}^{\infty} P(|X_{n}| > n^{1/p}) = \sum_{n=1}^{\infty} P(|X_{1}| > n^{1/p}) < \infty.
\end{equation*}

\noindent Therefore, by Borel--Cantelli lemma,

\begin{equation}\label{e204}
U_{n} \to 0  \qquad \mbox{a.s.}
\end{equation}

\noindent Moreover

\begin{equation}\label{e205}
\frac{\sum_{i=1}^{n} |X_{i}| \mathbb{I}_{\{|X_{i}| > n^{1/p}\}}}{n^{1/p}} \leqslant \frac{\sum_{i=1}^{n} |X_{i}|^{p} \mathbb{I}_{\{|X_{i}| > n^{1/p}\}}}{n} \leqslant \frac{\sum_{i=1}^{n} |X_{i}|^{p} \mathbb{I}_{\{|X_{i}| > i^{1/p}\}}}{n}.
\end{equation}

\noindent By~\eqref{e204} the right-hand side of~\eqref{e205} converges to zero almost sure and relation~\eqref{e201} follows.
\end{proof}

\begin{lemma}\label{Lem202}
Let $\{X_{n}\}_{n=1}^{\infty}$ be a sequence of identically distributed random variables. If $E|X_{1}|^p < \infty$ where $1 < p < 2$, then

\begin{equation}\label{e206}
\frac{\sum_{i=1}^{n} E(|X_{i}| \mathbb{I}_{\{|X_{i}| > n^{1/p}\}})}{n^{1/p}} \to 0 \qquad (n \to \infty).
\end{equation}

\end{lemma}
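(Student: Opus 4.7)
The plan is to exploit that $X_1,X_2,\ldots$ are identically distributed, which reduces the sum to a single expectation. Indeed, for every $i$ we have $E(|X_i|\mathbb{I}_{\{|X_i|>n^{1/p}\}})=E(|X_1|\mathbb{I}_{\{|X_1|>n^{1/p}\}})$, so the left-hand side of~\eqref{e206} is equal to $n^{1-1/p}E(|X_1|\mathbb{I}_{\{|X_1|>n^{1/p}\}})$. Thus the lemma reduces to showing
\[
n^{1-1/p}\,E\bigl(|X_1|\mathbb{I}_{\{|X_1|>n^{1/p}\}}\bigr)\to 0.
\]

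The key trick is to trade the extra factor $n^{1-1/p}$ against the size of $|X_1|$ on the truncation set, turning the first absolute moment into the $p$-th moment for which we have an integrability hypothesis. Since $1-p<0$, on the event $\{|X_1|>n^{1/p}\}$ we have $|X_1|^{1-p}\leqslant n^{(1-p)/p}=n^{1/p-1}$, hence
\[
|X_1|\mathbb{I}_{\{|X_1|>n^{1/p}\}}=|X_1|^p\cdot|X_1|^{1-p}\mathbb{I}_{\{|X_1|>n^{1/p}\}}\leqslant n^{1/p-1}|X_1|^p\mathbb{I}_{\{|X_1|>n^{1/p}\}}.
\]
Taking expectations and multiplying by $n^{1-1/p}$ gives
\[
n^{1-1/p}E\bigl(|X_1|\mathbb{I}_{\{|X_1|>n^{1/p}\}}\bigr)\leqslant E\bigl(|X_1|^p\mathbb{I}_{\{|X_1|>n^{1/p}\}}\bigr).
\]

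Finally, since $E|X_1|^p<\infty$ and $\mathbb{I}_{\{|X_1|>n^{1/p}\}}\to 0$ a.s., dominated convergence (with dominant $|X_1|^p$) implies that the right-hand side tends to zero, which completes the proof. There is no real obstacle here; the only subtle point is the direction of the inequality for the power $1-p$, which works in our favour precisely because $p>1$. Note that the lemma fails for $p=1$ (where this bound would be trivial but uninformative) and that the assumption $p<2$ is not used, reflecting the fact that this is a purely moment-truncation estimate.
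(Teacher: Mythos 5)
Your proof is correct. It takes a somewhat different and more streamlined route than the paper's. After reducing to $n^{1-1/p}E(|X_1|\mathbb{I}_{\{|X_1|>n^{1/p}\}})$ (the paper keeps the sum but the reduction is the same, since the variables are identically distributed), the paper writes $E(\xi\mathbb{I}_{\{\xi>a\}})=aP(\xi>a)+\int_a^\infty P(\xi>x)\,dx$ and handles the two resulting terms separately: the first via the convergence of $\sum_n P(|X_1|>n^{1/p})$, the second by inserting $x^{p-1}\geqslant n^{(p-1)/p}$ under the integral and invoking the tail of the convergent integral $\int_0^\infty x^{p-1}P(|X_1|>x)\,dx$. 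You instead apply the analogous pointwise bound $|X_1|\leqslant n^{1/p-1}|X_1|^p$ directly on the event $\{|X_1|>n^{1/p}\}$ and finish with one application of dominated convergence; this avoids the tail-integral representation entirely and is, if anything, cleaner. The one inaccuracy is your closing aside: the lemma does not fail for $p=1$ --- the statement then reads $E(|X_1|\mathbb{I}_{\{|X_1|>n\}})\to 0$, which still holds by dominated convergence under $E|X_1|<\infty$; it is only your particular trade-off that becomes vacuous there. This aside does not affect the validity of the argument for $1<p<2$. Your observation that $p<2$ is not used is correct and matches the paper, which likewise only uses $p>1$ in this lemma.
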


\begin{proof}
Note that for any non-negative random variable $\xi$ and ${a>0}$,

\begin{equation*}
E(\xi \mathbb{I}_{\{\xi > a\}}) = a P(\xi > a) + \int_{a}^{\infty} P(\xi > x) \, dx.
\end{equation*}

\noindent Hence

\begin{multline}\label{e208}
\frac{\sum_{i=1}^{n} E(|X_{i}| \mathbb{I}_{\{|X_{i}| > n^{1/p}\}})}{n^{1/p}} = \\ = \frac{\sum_{i=1}^{n} \left(n^{1/p} P(|X_{i}| > n^{1/p}) + \int_{n^{1/p}}^{\infty} P(|X_{i}| > x) \, dx \right)}{n^{1/p}} = \\ = n P(|X_{1}| > n^{1/p}) + n^{\frac{p-1}{p}} \int_{n^{1/p}}^{\infty} P(|X_{1}| > x) \, dx = I_{1n} + I_{2n}.
\end{multline}

\noindent Using~\eqref{e202}, we get

\begin{equation}\label{e209}
I_{1n} = n P(|X_{1}| > n^{1/p}) \to 0 \qquad (n \to \infty).
\end{equation}

\noindent From obvious equality

\begin{equation}\label{e210}
E|X_{1}|^p = p \int_{0}^{\infty} x^{p-1} P(|X_{1}| > x) \, dx
\end{equation}

\noindent it follows that

\begin{equation*}
I_{2n} = n^{\frac{p-1}{p}} \int_{n^{1/p}}^{\infty} P(|X_{1}| > x) \, dx \leqslant \int_{n^{1/p}}^{\infty} x^{p-1} P(|X_{1}| > x) \, dx \to 0 \qquad (n \to \infty),
\end{equation*}

\noindent which, in conjunction with~\eqref{e208} and~\eqref{e209}, proves~\eqref{e206}.
\end{proof}

\begin{lemma}\label{Lem203}
Let $\{X_{n}\}_{n=1}^{\infty}$ be a sequence of identically distributed random variables. If $E|X_{1}|^p < \infty$ where $1 < p < 2$, then

\begin{equation}\label{e212}
\sum_{n=1}^{\infty} \frac{1}{2^{\frac{2 n}{p}}} \sum_{k=1}^{2^{n}} E(|X_{k}|^{2} \mathbb{I}_{\{|X_{k}| \leqslant 2^{\frac{n}{p}}\}}) < \infty.
\end{equation}

\end{lemma}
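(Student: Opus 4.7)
My plan is to use the identical distribution hypothesis to collapse the inner sum to a single expectation, then swap the order of summation via Tonelli and exploit a dyadic decomposition of $|X_1|$; the resulting exponents will cancel to leave a bound by $E|X_1|^p$.

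First I would observe that, because $X_1,X_2,\ldots$ are identically distributed, the inner sum equals $2^{n}\, E(|X_1|^{2} \mathbb{I}_{\{|X_1|\le 2^{n/p}\}})$. Setting $\alpha = 1-2/p$, which is negative since $p<2$, the series in question rewrites as
\[
\sum_{n=1}^{\infty} 2^{n\alpha}\, E(|X_1|^{2} \mathbb{I}_{\{|X_1|\le 2^{n/p}\}}).
\]

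Next I would decompose $\{|X_1|\le 2^{n/p}\}$ into dyadic shells $A_0 = \{|X_1|\le 1\}$ and $A_j = \{2^{(j-1)/p} < |X_1|\le 2^{j/p}\}$ for $1\le j \le n$. Applying Tonelli's theorem to swap the two summations yields
\[
\sum_{j=0}^{\infty} E(|X_1|^{2} \mathbb{I}_{A_j}) \sum_{n\ge \max(j,1)} 2^{n\alpha} \le C_p \sum_{j=0}^{\infty} 2^{j\alpha}\, E(|X_1|^{2} \mathbb{I}_{A_j}),
\]
where $C_p$ is the sum of the geometric tail with ratio $2^{\alpha}<1$.

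The key pointwise estimate is that on $A_j$ one has $|X_1|^{2-p}\le 2^{j(2-p)/p}$, hence $|X_1|^{2}\le 2^{j(2-p)/p}|X_1|^{p}$. The exponents satisfy $j\alpha + j(2-p)/p = 0$, so the powers of $2$ cancel exactly. Each term is therefore bounded by $C_p E(|X_1|^{p} \mathbb{I}_{A_j})$, and summing over $j$ gives the clean bound $C_p\, E|X_1|^{p}<\infty$, which proves~\eqref{e212}.

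The only real obstacle is bookkeeping: verifying the exponent cancellation and handling the $j=0$ shell, where the same inequality $|X_1|^{2}\le |X_1|^{p}$ holds trivially since $|X_1|\le 1$. No probabilistic input beyond identical distribution and $E|X_1|^{p}<\infty$ is needed; in particular, pairwise independence is not used in this lemma.
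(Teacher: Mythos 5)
Your proof is correct and follows essentially the same route as the paper's: a dyadic decomposition at the scales $2^{j/p}$, an interchange of the order of summation, summation of the geometric tail $\sum_{n\ge j}2^{n(1-2/p)}$, and the exponent cancellation $2^{j(1-2/p)}\cdot 2^{j(2-p)/p}=1$, ending in a bound by a constant times $E|X_{1}|^{p}$. The only difference is cosmetic: you work directly with expectations over the shells $A_{j}$, whereas the paper first passes to tail-probability integrals via $E(\xi\,\mathbb{I}_{\{\xi\le a\}})\le\int_{0}^{a}P(\xi>x)\,dx$ and carries out the same dyadic decomposition on the resulting integral $\int_{0}^{2^{n/p}}yP(|X_{1}|>y)\,dy$.
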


\begin{proof}
Note that for any non-negative random variable $\xi$ and ${a>0}$,

\begin{equation*}
E(\xi \mathbb{I}_{\{\xi \leqslant a\}}) \leqslant \int_{0}^{a} P(\xi > x) \, dx.
\end{equation*}

\noindent Hence, using~\eqref{e210}, for some positive constants $C$ and $C_{1}$, we obtain

\begin{multline*}
\sum_{n=1}^{\infty} \frac{1}{2^{\frac{2 n}{p}}} \sum_{k=1}^{2^{n}} E(|X_{k}|^{2} \mathbb{I}_{\{|X_{k}| \leqslant 2^{\frac{n}{p}}\}}) \leqslant \\ \leqslant \sum_{n=1}^{\infty} \frac{1}{2^{\frac{2 n}{p}}} \sum_{k=1}^{2^{n}} \int_{0}^{2^{\frac{2 n}{p}}} P(|X_{k}| > x^{1/2}) \, dx \leqslant \\ \leqslant C \sum_{n=1}^{\infty} \frac{1}{2^{\frac{2 n}{p}}} \sum_{k=1}^{2^{n}} \int_{0}^{2^{\frac{n}{p}}} y P(|X_{k}| > y) \, dy \leqslant \\ \leqslant C \sum_{n=1}^{\infty} 2^{\frac{n (p-2)}{p}} \int_{0}^{2^{\frac{n}{p}}} y P(|X_{1}| > y) \, dy \leqslant \\ \leqslant C_{1} + C \sum_{n=1}^{\infty} 2^{\frac{n (p-2)}{p}} \sum_{i=1}^{n} \int_{2^{\frac{i-1}{p}}}^{2^{\frac{i}{p}}} y P(|X_{1}| > y) \, dy \leqslant \\ \leqslant C_{1} + C \sum_{i=1}^{\infty} \int_{2^{\frac{i-1}{p}}}^{2^{\frac{i}{p}}} y P(|X_{1}| > y) \, dy \sum_{n=i}^{\infty} 2^{\frac{n (p-2)}{p}} \leqslant \\ \leqslant C_{1} + C \sum_{i=1}^{\infty} 2^{\frac{i (2-p)}{p}} \int_{2^{\frac{i-1}{p}}}^{2^{\frac{i}{p}}} y^{p-1} P(|X_{1}| > y) \, dy \cdot 2^{\frac{i (p-2)}{p}} \leqslant \\ \leqslant C_{1} + C \sum_{i=1}^{\infty} \int_{2^{\frac{i-1}{p}}}^{2^{\frac{i}{p}}} y^{p-1} P(|X_{1}| > y) \, dy \leqslant \\ \leqslant C_{1} + C \int_{0}^{\infty} y^{p-1} P(|X_{1}| > y) \, dy \leqslant C_{1} + C E|X_{1}|^{p} < \infty,
\end{multline*}

\noindent and~\eqref{e212} follows.
\end{proof}

\bigskip

\begin{proof}[Proof of Theorem~\ref{T1}] Without loss of generality it can be assumed that $EX_{1}=0$. Let

\begin{equation*}\label{e317}
X_{i}^{(n)} = X_{i} \mathbb{I}_{\{|X_{i}| \leqslant n^{1/p}\}}, \qquad i \geqslant 1, \; n \geqslant 1,
\end{equation*}

\begin{equation*}\label{e318}
S_{j}^{(n)} = \sum_{i=1}^{j} X_{i}^{(n)}, \qquad j \geqslant 1, \; n \geqslant 1.
\end{equation*}

\textit{Step 1.} Let us prove that

\begin{equation}\label{e319}
   \frac{S_{n}-S_{n}^{(n)}}{n^{1/p}} \to 0 \qquad \mbox{a.s.}
\end{equation}

\noindent We have

\begin{equation*}
\frac{|S_{n}-S_{n}^{(n)}|}{n^{1/p}} = \frac{|\sum_{i=1}^{n} X_{i} \mathbb{I}_{\{|X_{i}| > n^{1/p}\}}|}{n^{1/p}} \leqslant \frac{\sum_{i=1}^{n} |X_{i}| \mathbb{I}_{\{|X_{i}| > n^{1/p}\}}}{n^{1/p}}.
\end{equation*}

\noindent Application of Lemma~\ref{Lem201} yields to~\eqref{e319}.

\textit{Step 2.} Let us show that

\begin{equation}\label{e321}
   \frac{ES_{n}^{(n)}}{n^{1/p}} \to 0 \qquad (n \to \infty).
\end{equation}

\noindent We have

\begin{multline*}
\frac{|ES_{n}^{(n)}|}{n^{1/p}} = \frac{|\sum_{i=1}^{n} EX_{i}^{(n)}|}{n^{1/p}} \leqslant \frac{\sum_{i=1}^{n} |EX_{i}^{(n)}|}{n^{1/p}} = \\ = \frac{\sum_{i=1}^{n} |E(X_{i}-X_{i}^{(n)})|}{n^{1/p}} \leqslant \frac{\sum_{i=1}^{n} E(|X_{i}| \mathbb{I}_{\{|X_{i}| > n^{1/p}\}})}{n^{1/p}}.
\end{multline*}

\noindent The application of Lemma~\ref{Lem202} yields to~\eqref{e321}.

Now we note that to conclude the proof of the theorem, it is sufficiently to show that

\begin{equation}\label{e323}
   \frac{S_{n}^{(n)}-ES_{n}^{(n)}}{n^{1/p}} \to 0 \qquad \mbox{a.s.}
\end{equation}

\textit{Step 3.} Let us prove that

\begin{equation}\label{e324}
\frac{S_{2^{n}}^{(2^{n})}-ES_{2^{n}}^{(2^{n})}}{2^{\frac{n}{p}}} \to 0 \qquad \mbox{a.s.}
\end{equation}

\noindent Using Lemma~\ref{Lem203}, by Chebyshev's inequality, for any $\varepsilon >0$, we obtain

\begin{multline*}
\sum_{n=1}^{\infty} P \left( \left| \frac{S_{2^{n}}^{(2^{n})}-ES_{2^{n}}^{(2^{n})}}{2^{\frac{n}{p}}} \right| > \varepsilon  \right) \leqslant \frac{1}{\varepsilon^{2}} \sum_{n=1}^{\infty} \frac{Var (S_{2^{n}}^{(2^{n})})}{2^{\frac{2 n}{p}}} = \frac{1}{\varepsilon^{2}} \sum_{n=1}^{\infty} \frac{\sum_{k=1}^{2^{n}} Var (X_{k}^{(2^{n})})}{2^{\frac{2 n}{p}}} \leqslant \\ \leqslant \frac{1}{\varepsilon^{2}} \sum_{n=1}^{\infty} \frac{\sum_{k=1}^{2^{n}} E(X_{k}^{(2^{n})})^{2}}{2^{\frac{2 n}{p}}} = \frac{1}{\varepsilon^{2}} \sum_{n=1}^{\infty} \frac{1}{2^{\frac{2 n}{p}}} \sum_{k=1}^{2^{n}} E(|X_{k}|^{2} \mathbb{I}_{\{|X_{k}| \leqslant 2^{\frac{n}{p}}\}}) < \infty.
\end{multline*}

\noindent Thus, by Borel-Cantelli lemma, we have that relation~\eqref{e324} is proved.

\textit{Step 4.} Let us prove that

\begin{equation}\label{e328}
\lim_{n \to \infty} \max_{2^{n}<k \leqslant 2^{n+1}} \left| \frac{\sum\limits_{i=2^{n}+1}^{k} (X_{i}^{(i)} - EX_{i}^{(i)})}{2^{\frac{n+1}{p}}} \right| = 0 \qquad \mbox{a.s.}
\end{equation}

\noindent Using Lemma~\ref{Lem203}, by Chebyshev's inequality, for any $\varepsilon >0$, we obtain

\begin{multline*}
\sum_{n=1}^{\infty} P \left(\max_{2^{n}<k \leqslant 2^{n+1}} \left| \frac{\sum\limits_{i=2^{n}+1}^{k} (X_{i}^{(i)} - EX_{i}^{(i)})}{2^{\frac{n+1}{p}}} \right| > \varepsilon  \right) \leqslant \\ \leqslant \frac{1}{\varepsilon^{2}} \sum_{n=1}^{\infty} \frac{1}{2^{\frac{2 (n+1)}{p}}} \sum_{k=1}^{2^{n+1}} E(|X_{k}|^{2} \mathbb{I}_{\{|X_{k}| \leqslant 2^{\frac{n+1}{p}}\}}) < \infty.
\end{multline*}

\noindent The application of Borel-Cantelli lemma yields to~\eqref{e328}.

\textit{Step 5.} We shall prove that

\begin{equation}\label{e332}
\lim_{n \to \infty} \max_{2^{n}<k \leqslant 2^{n+1}} \frac{ \left| S_{k}^{(2^{n})} - ES_{k}^{(2^{n})} \right| }{2^{\frac{n+1}{p}}} = 0 \qquad \mbox{a.s.}
\end{equation}

\noindent For $n \geqslant 1$ and $k$ such that $2^{n}<k \leqslant 2^{n+1}$ we have

\begin{multline*}
\left| S_{k}^{(2^{n})} - ES_{k}^{(2^{n})} \right| = \left| S_{k}^{(2^{n})} - ES_{k}^{(2^{n})} + S_{2^{n}}^{(2^{n})} - S_{2^{n}}^{(2^{n})} + ES_{2^{n}}^{(2^{n})} - ES_{2^{n}}^{(2^{n})} \right| = \\ = \left| (S_{k}^{(2^{n})} - S_{2^{n}}^{(2^{n})}) - E(S_{k}^{(2^{n})} - S_{2^{n}}^{(2^{n})}) + (S_{2^{n}}^{(2^{n})} - ES_{2^{n}}^{(2^{n})}) \right| \leqslant \\ \leqslant \left| (S_{k}^{(2^{n})} - S_{2^{n}}^{(2^{n})}) - E(S_{k}^{(2^{n})} - S_{2^{n}}^{(2^{n})}) \right| + \left| (S_{2^{n}}^{(2^{n})} - ES_{2^{n}}^{(2^{n})}) \right| = \\ = \left| \sum_{i=2^{n}+1}^{k} X_{i} \mathbb{I}_{\{|X_{i}| \leqslant 2^{\frac{n}{p}}\}} - E(\sum_{i=2^{n}+1}^{k} X_{i} \mathbb{I}_{\{|X_{i}| \leqslant 2^{\frac{n}{p}}\}}) \right| + \left| (S_{2^{n}}^{(2^{n})} - ES_{2^{n}}^{(2^{n})}) \right| = \\ = |\sum_{i=2^{n}+1}^{k} (X_{i} \mathbb{I}_{\{|X_{i}| \leqslant i^{1/p}\}} - X_{i} \mathbb{I}_{\{2^{\frac{n}{p}} < |X_{i}| \leqslant i^{1/p}\}}) - \\ - E ( \sum_{i=2^{n}+1}^{k} (X_{i} \mathbb{I}_{\{|X_{i}| \leqslant i^{1/p}\}} - X_{i} \mathbb{I}_{\{2^{\frac{n}{p}} < |X_{i}| \leqslant i^{1/p}\}}))| + \left| (S_{2^{n}}^{(2^{n})} - ES_{2^{n}}^{(2^{n})}) \right| = \\ = |\sum_{i=2^{n}+1}^{k} (X_{i} \mathbb{I}_{\{|X_{i}| \leqslant i^{1/p}\}} - E(X_{i} \mathbb{I}_{\{|X_{i}| \leqslant i^{1/p}\}})) - \\ - \sum_{i=2^{n}+1}^{k} (X_{i} \mathbb{I}_{\{2^{\frac{n}{p}} < |X_{i}| \leqslant i^{1/p}\}} - E(X_{i} \mathbb{I}_{\{2^{\frac{n}{p}} < |X_{i}| \leqslant i^{1/p}\}}))| + \left| (S_{2^{n}}^{(2^{n})} - ES_{2^{n}}^{(2^{n})}) \right| = \\ = |\sum_{i=2^{n}+1}^{k} (X_{i}^{(i)} - EX_{i}^{(i)}) - \sum_{i=2^{n}+1}^{k} X_{i} \mathbb{I}_{\{2^{\frac{n}{p}} < |X_{i}| \leqslant i^{1/p}\}} + \sum_{i=2^{n}+1}^{k} E(X_{i} \mathbb{I}_{\{2^{\frac{n}{p}} < |X_{i}| \leqslant i^{1/p}\}})| + \\ + \left| (S_{2^{n}}^{(2^{n})} - ES_{2^{n}}^{(2^{n})}) \right| \leqslant \left| \sum_{i=2^{n}+1}^{k} (X_{i}^{(i)} - EX_{i}^{(i)}) \right| + \left| \sum_{i=2^{n}+1}^{k} X_{i} \mathbb{I}_{\{2^{\frac{n}{p}} < |X_{i}| \leqslant i^{1/p}\}} \right| + \\ + \left| \sum_{i=2^{n}+1}^{k} E(X_{i} \mathbb{I}_{\{2^{\frac{n}{p}} < |X_{i}| \leqslant i^{1/p}\}}) \right| + \left| (S_{2^{n}}^{(2^{n})} - ES_{2^{n}}^{(2^{n})}) \right| \leqslant \\ \leqslant \left| \sum_{i=2^{n}+1}^{k} (X_{i}^{(i)} - EX_{i}^{(i)}) \right| + \sum_{i=2^{n}+1}^{k} |X_{i}| \mathbb{I}_{\{2^{\frac{n}{p}} < |X_{i}| \leqslant i^{1/p}\}} + \\ + \sum_{i=2^{n}+1}^{k} E(|X_{i}| \mathbb{I}_{\{|X_{i}| > 2^{\frac{n}{p}}\}}) + \left| (S_{2^{n}}^{(2^{n})} - ES_{2^{n}}^{(2^{n})}) \right| \leqslant \\ \leqslant \left| \sum_{i=2^{n}+1}^{k} (X_{i}^{(i)} - EX_{i}^{(i)}) \right| + \sum_{i=2^{n}+1}^{2^{n+1}} |X_{i}| \mathbb{I}_{\{2^{\frac{n}{p}} < |X_{i}| \leqslant i^{1/p}\}} + \\ + \sum_{i=2^{n}+1}^{2^{n+1}} E(|X_{i}| \mathbb{I}_{\{|X_{i}| > 2^{\frac{n}{p}}\}}) + \left| (S_{2^{n}}^{(2^{n})} - ES_{2^{n}}^{(2^{n})}) \right| \leqslant \\ \leqslant \left| \sum_{i=2^{n}+1}^{k} (X_{i}^{(i)} - EX_{i}^{(i)}) \right| + \sum_{i=1}^{2^{n+1}} |X_{i}| \mathbb{I}_{\{|X_{i}| > 2^{\frac{n}{p}}\}} + \\ + \sum_{i=1}^{2^{n}} E(|X_{i}| \mathbb{I}_{\{|X_{i}| > 2^{\frac{n}{p}}\}}) + \left| (S_{2^{n}}^{(2^{n})} - ES_{2^{n}}^{(2^{n})}) \right|.
\end{multline*}

\noindent Therefore

\begin{multline*}
\max_{2^{n}<k \leqslant 2^{n+1}} \left| S_{k}^{(2^{n})} - ES_{k}^{(2^{n})} \right| \leqslant \max_{2^{n}<k \leqslant 2^{n+1}} \left| \sum_{i=2^{n}+1}^{k} (X_{i}^{(i)} - EX_{i}^{(i)}) \right| + \\ + \sum_{i=1}^{2^{n+1}} |X_{i}| \mathbb{I}_{\{|X_{i}| > 2^{\frac{n}{p}}\}} + \sum_{i=1}^{2^{n}} E(|X_{i}| \mathbb{I}_{\{|X_{i}| > 2^{\frac{n}{p}}\}}) + \left| S_{2^{n}}^{(2^{n})} - ES_{2^{n}}^{(2^{n})} \right|.
\end{multline*}

\noindent The application of Lemmas~\ref{Lem201} and~\ref{Lem202} and relations~\eqref{e324} and~\eqref{e328} yields to~\eqref{e332}.

\textit{Step 6.} We shall prove that

\begin{equation}\label{e335}
\lim_{n \to \infty} \max_{2^{n}<k \leqslant 2^{n+1}} \frac{ \left| S_{2^{n}}^{(k)} - ES_{2^{n}}^{(k)} \right| }{2^{\frac{n+1}{p}}} = 0 \qquad \mbox{a.s.}
\end{equation}

\noindent For $n \geqslant 1$ and $k$ such that $2^{n}<k \leqslant 2^{n+1}$ we have

\begin{multline*}
\left| S_{2^{n}}^{(k)} - ES_{2^{n}}^{(k)} \right| = \left| S_{2^{n}}^{(k)} - ES_{2^{n}}^{(k)} + S_{2^{n}}^{(2^{n})} - S_{2^{n}}^{(2^{n})} + ES_{2^{n}}^{(2^{n})} - ES_{2^{n}}^{(2^{n})} \right| = \\ = \left| (S_{2^{n}}^{(k)} - S_{2^{n}}^{(2^{n})}) - E(S_{2^{n}}^{(k)} - S_{2^{n}}^{(2^{n})}) + (S_{2^{n}}^{(2^{n})} - ES_{2^{n}}^{(2^{n})}) \right| \leqslant \\ \leqslant \left| (S_{2^{n}}^{(k)} - S_{2^{n}}^{(2^{n})}) - E(S_{2^{n}}^{(k)} - S_{2^{n}}^{(2^{n})}) \right| + \left| (S_{2^{n}}^{(2^{n})} - ES_{2^{n}}^{(2^{n})}) \right| = \\ = |\sum_{i=1}^{2^{n}} (X_{i} \mathbb{I}_{\{|X_{i}| \leqslant k^{1/p}\}} - X_{i} \mathbb{I}_{\{|X_{i}| \leqslant 2^{\frac{n}{p}}\}}) - E ( \sum_{i=1}^{2^{n}} (X_{i} \mathbb{I}_{\{|X_{i}| \leqslant k^{1/p}\}} - X_{i} \mathbb{I}_{\{|X_{i}| \leqslant 2^{\frac{n}{p}}\}}))| + \\ + \left| (S_{2^{n}}^{(2^{n})} - ES_{2^{n}}^{(2^{n})}) \right| = \\ = |\sum_{i=1}^{2^{n}} X_{i} \mathbb{I}_{\{2^{\frac{n}{p}} < |X_{i}| \leqslant k^{1/p}\}} - \sum_{i=1}^{2^{n}} E(X_{i} \mathbb{I}_{\{2^{\frac{n}{p}} < |X_{i}| \leqslant k^{1/p}\}})| + \left| (S_{2^{n}}^{(2^{n})} - ES_{2^{n}}^{(2^{n})}) \right| \leqslant \\ \leqslant |\sum_{i=1}^{2^{n}} X_{i} \mathbb{I}_{\{2^{\frac{n}{p}} < |X_{i}| \leqslant k^{1/p}\}}| + |\sum_{i=1}^{2^{n}} E(X_{i} \mathbb{I}_{\{2^{\frac{n}{p}} < |X_{i}| \leqslant k^{1/p}\}})| + \left| (S_{2^{n}}^{(2^{n})} - ES_{2^{n}}^{(2^{n})}) \right| \leqslant \\ \leqslant \sum_{i=1}^{2^{n}} |X_{i}| \mathbb{I}_{\{2^{\frac{n}{p}} < |X_{i}| \leqslant k^{1/p}\}} + \sum_{i=1}^{2^{n}} E(|X_{i}| \mathbb{I}_{\{2^{\frac{n}{p}} < |X_{i}| \leqslant k^{1/p}\}}) + \left| (S_{2^{n}}^{(2^{n})} - ES_{2^{n}}^{(2^{n})}) \right| \leqslant \\ \leqslant \sum_{i=1}^{2^{n}} |X_{i}| \mathbb{I}_{\{|X_{i}| > 2^{\frac{n}{p}}\}} + \sum_{i=1}^{2^{n}} E(|X_{i}| \mathbb{I}_{\{|X_{i}| > 2^{\frac{n}{p}}\}}) + \left| (S_{2^{n}}^{(2^{n})} - ES_{2^{n}}^{(2^{n})}) \right|.
\end{multline*}

\noindent Therefore

\begin{multline*}
\max_{2^{n}<k \leqslant 2^{n+1}} \left| S_{2^{n}}^{(k)} - ES_{2^{n}}^{(k)} \right| \leqslant \sum_{i=1}^{2^{n}} |X_{i}| \mathbb{I}_{\{|X_{i}| > 2^{\frac{n}{p}}\}} + \\ + \sum_{i=1}^{2^{n}} E(|X_{i}| \mathbb{I}_{\{|X_{i}| > 2^{\frac{n}{p}}\}}) + \left| S_{2^{n}}^{(2^{n})} - ES_{2^{n}}^{(2^{n})} \right|.
\end{multline*}

\noindent The application of Lemmas~\ref{Lem201} and~\ref{Lem202} and relation~\eqref{e324} yields to~\eqref{e335}.

\textit{Step 7.} We shall prove that

\begin{equation}\label{e338}
\lim_{n \to \infty} \max_{2^{n}<k \leqslant 2^{n+1}} \left| \frac{S_{k}^{(k)} - ES_{k}^{(k)}}{2^{\frac{n+1}{p}}} \right| = 0 \qquad \mbox{a.s.}
\end{equation}

\noindent For $n \geqslant 1$ and $k$ such that $2^{n}<k \leqslant 2^{n+1}$ we have

\begin{multline*}
\left| S_{k}^{(k)} - ES_{k}^{(k)} \right| = \\ = | \left[ (S_{k}^{(k)} - S_{2^{n}}^{(k)}) + (S_{k}^{(2^{n})} - S_{2^{n}}^{(2^{n})}) \right] - E \left[ (S_{k}^{(k)} - S_{2^{n}}^{(k)}) + (S_{k}^{(2^{n})} - S_{2^{n}}^{(2^{n})}) \right] + \\ + (S_{2^{n}}^{(k)} - ES_{2^{n}}^{(k)}) - (S_{k}^{(2^{n})} - ES_{k}^{(2^{n})}) + (S_{2^{n}}^{(2^{n})} - ES_{2^{n}}^{(2^{n})})| \leqslant \\ \leqslant \left| \sum_{i=2^{n}+1}^{k} X_{i} \mathbb{I}_{\{2^{\frac{n}{p}} < |X_{i}| \leqslant k^{1/p}\}} - E (\sum_{i=2^{n}+1}^{k} X_{i} \mathbb{I}_{\{2^{\frac{n}{p}} < |X_{i}| \leqslant k^{1/p}\}}) \right| + \\ + \left| S_{2^{n}}^{(k)} - ES_{2^{n}}^{(k)} \right| + \left| S_{k}^{(2^{n})} - ES_{k}^{(2^{n})} \right| + \left| S_{2^{n}}^{(2^{n})} - ES_{2^{n}}^{(2^{n})} \right| \leqslant \\ \leqslant \left| \sum_{i=2^{n}+1}^{k} X_{i} \mathbb{I}_{\{2^{\frac{n}{p}} < |X_{i}| \leqslant k^{1/p}\}} \right| + \left| \sum_{i=2^{n}+1}^{k} E(X_{i} \mathbb{I}_{\{2^{\frac{n}{p}} < |X_{i}| \leqslant k^{1/p}\}}) \right| + \\ + \left| S_{2^{n}}^{(k)} - ES_{2^{n}}^{(k)} \right| + \left| S_{k}^{(2^{n})} - ES_{k}^{(2^{n})} \right| + \left| S_{2^{n}}^{(2^{n})} - ES_{2^{n}}^{(2^{n})} \right| \leqslant \\ \leqslant \sum_{i=2^{n}+1}^{2^{n+1}} |X_{i}| \mathbb{I}_{\{2^{\frac{n}{p}} < |X_{i}| \leqslant 2^{\frac{n+1}{p}}\}} + \sum_{i=2^{n}+1}^{2^{n+1}} E(|X_{i}| \mathbb{I}_{\{2^{\frac{n}{p}} < |X_{i}| \leqslant 2^{\frac{n+1}{p}}\}}) + \\ + \left| S_{2^{n}}^{(k)} - ES_{2^{n}}^{(k)} \right| + \left| S_{k}^{(2^{n})} - ES_{k}^{(2^{n})} \right| + \left| S_{2^{n}}^{(2^{n})} - ES_{2^{n}}^{(2^{n})} \right| \leqslant \\ \leqslant \sum_{i=1}^{2^{n+1}} |X_{i}| \mathbb{I}_{\{|X_{i}| > 2^{\frac{n}{p}}\}} + \sum_{i=1}^{2^{n}} E(|X_{i}| \mathbb{I}_{\{|X_{i}| > 2^{\frac{n}{p}}\}}) + \\ + \left| S_{2^{n}}^{(k)} - ES_{2^{n}}^{(k)} \right| + \left| S_{k}^{(2^{n})} - ES_{k}^{(2^{n})} \right| + \left| S_{2^{n}}^{(2^{n})} - ES_{2^{n}}^{(2^{n})} \right|.
\end{multline*}

\noindent Therefore

\begin{multline*}
\max_{2^{n} < k \leqslant 2^{n+1}} \left| S_{k}^{(k)} - ES_{k}^{(k)} \right| \leqslant \sum_{i=1}^{2^{n+1}} |X_{i}| \mathbb{I}_{\{|X_{i}| > 2^{\frac{n}{p}}\}} + \\ + \sum_{i=1}^{2^{n}} E(|X_{i}| \mathbb{I}_{\{|X_{i}| > 2^{\frac{n}{p}}\}}) + \max_{2^{n} < k \leqslant 2^{n+1}} \left| S_{2^{n}}^{(k)} - ES_{2^{n}}^{(k)} \right| + \\ + \max_{2^{n} < k \leqslant 2^{n+1}} \left| S_{k}^{(2^{n})} - ES_{k}^{(2^{n})} \right| + \left| S_{2^{n}}^{(2^{n})} - ES_{2^{n}}^{(2^{n})} \right|.
\end{multline*}

\noindent The application of Lemmas~\ref{Lem201} and~\ref{Lem202} and relations~\eqref{e324},~\eqref{e332} and~\eqref{e335} yields to~\eqref{e338}. Relation~\eqref{e338} implies~\eqref{e323}. Relations~\eqref{e319},~\eqref{e321} and~\eqref{e323} imply~\eqref{e1001}. Theorem~\ref{T1} is proved.
\end{proof}

\bigskip

\end{document}